\theoremstyle{plain}
\newtheorem{teo}{Theorem}[section]
\newtheorem{theo}[teo]{Theorem}
\newtheorem{coro}[teo]{Corollary}
\theoremstyle{remark}
\newtheorem{rema}[teo]{Remark}
\newtheorem{rems}[teo]{Remarks}
\theoremstyle{definition}
\newtheorem{defi}[teo]{Definition}
\newtheorem{obse}[teo]{Observation}
\newtheorem{example}[teo]{Example}
\newcommand{\compo}{\,{\scriptstyle \circ }\,}
\newcommand{\id}{\ensuremath{\mathrm{id}}}
\newcommand{\ant}{\mathcal{S}}
\renewcommand{\k}{{\mathbb K}}
\newcommand{\K}{{\mathbb K}}
\newcommand{\C}{{\mathbb C}}
\newcommand{\tl}{\triangleleft}
\newcommand{\tr}{\triangleright}
\newcommand{\rhu}{{\rightharpoonup}}
\newcommand{\lhu}{{\leftharpoonup}}
\newcommand{\cop}{\ensuremath{\mathrm{cop}}}
\begin{document}

\title[Almost involutive Hopf algebras]
{Almost involutive Hopf algebras}

\author{Andr\'es Abella}
\address{Facultad de Ciencias\\Universidad de la Rep\'ublica\\
Igu\'a 4225\\11400 Montevideo\\Uruguay\\}
\email{andres@cmat.edu.uy}
\author{Walter Ferrer Santos}
\thanks{The second author would like to thank Csic-UDELAR,  Conycit-MEC, Uruguay and Anii, Uruguay.}
\email{wrferrer@cmat.edu.uy}
\begin{abstract} We define the concept of \emph{companion automorphism} of a Hopf algebra $H$ as an automorphism 
$\sigma:H \rightarrow H$: $\sigma^2=\ant^2$ --where $\ant$ denotes the antipode--.  
A Hopf algebra is said to be \emph{almost involutive} (AI) if it admits a companion automorphism that can be viewed as a special additional symmetry. We present examples and study some of the basic properties and constructions 
of AI-Hopf algebras centering the attention 
in the finite dimensional case. In particular we show that within the family of Hopf algebras of dimension smaller or equal than 15, 
only in dimension eight and twelve, there are non almost involutive Hopf algebras.   
\end{abstract}
\maketitle

\section{Introduction.}

We start by summarizing the contents of this paper. 

\bigskip
\noindent
In Section \ref{section:defejemplos} we introduce the definition of an \emph{almost involutive Hopf algebra}, show that 
Sweedler's Hopf algebra is an example and observe that compact quantum groups are also examples --in the infinite 
dimensional case--. Then we show that some of the standardt constructions, such as 
that of a matched pair or in particular of the Drinfel'd double, when applied to almost involutive Hopf algebras yield as result an almost involutive Hopf algebra. 
Moreover, as in the case that the order of the antipode squared is odd the AI property is authomatic, we concentrate in the interesting case that is when the order of the antipode squared is even.  

\bigskip
\noindent
In Section \ref{section:prelim} we recall some aspects of the theory of integrals in finite dimensional Hopf algebras,
the modular function, the modular element, etc. These tools will be used in the rest of the paper. 

\bigskip
\noindent
In Section \ref{section:examples} we present some examples and non--examples of almost involutive Hopf algebras, and show that 
except at dimension 8 and 12 and for only a few types, all Hopf algebras of dimension smaller or equal than 15 are almost 
involutive. 

\bigskip
\noindent
In the Appendix, we present --a probably well known-- result on square roots of linear automorphisms of Hopf algebras that yield necessary 
and sufficient conditions for a finite order automorphism to have a square root that is also a Hopf automorphism. This general result can be
used in some examples, for example we use it to show that the Taft algebras are almost involutive. 

\bigskip
\noindent
We finish this introduction with some commentaries about notations.

\bigskip
\noindent
We  work over an algebraically closed field $\k$ of characteristic zero and adopt the usual Sweedler's notation and the other conventions
in the theory of Hopf algebras as they appear for example in \cite{kn:mont}, e.g. $\ant$ denotes the antipode, and $\lambda$ is an integral, .
Also a non zero element $g$ in $H$ is called a \emph{group-like} element if $\Delta(g)=g\otimes g$ and we 
designate as $G(H)$ and $G(H^\vee)$ the set of group-like elements in the original Hopf algebra and its dual. Moreover, if $x\in H$ 
is such that $\Delta(x)=x\otimes g+ h\otimes x$, where $g,h\in G(H)$, then $x$ is called a {$(g,h)$-primitive} element and we write as $P_{g,h}$ the space of $(g,h)$-primitive elements oh $H$.
A Hopf algebra is \textit{pointed} if its only simple subcoalgebras are the dimensional.

\bigskip
\noindent
In general we concentrate our considerations in the case that finite dimensional Hopf algebras.

\section{Main definition and general examples.}\label{section:defejemplos}
\subsection{General considerations}
Recall that a Hopf algebra  $H$ is called \textit{involutive} if $\ant^2=\id$. Examples of involutive Hopf algebras are commutative, cocommutative or semisimple Hopf algebras over $\C$.

\begin{defi}
We say that a Hopf algebra $H$ is \textit{almost involutive} or that is an \textit{AI-Hopf algebra} if there exists a Hopf algebra automorphism 
$\sigma:H\to H$ such that $\ant^2=\sigma^2$. An automorphism $\sigma$ as above, is called a {\em companion automorphism} of $\ant$ or simply a {\em companion automorphism} of $H$. 
\end{defi}

It follows from Radford's formula --see \cite{kn:bbt}, \cite{kn:radfordbasic} and \cite{kn:schcordoba}-- that $\ant^2$ is a diagonalizable 
Hopf algebra automorphism of finite order, more precisely: $\operatorname{ord}(\ant^2)| 2\operatorname{ord}(a)\operatorname{ord}(\alpha)$ --where $a$ is the modular element and $\alpha$ the modular form, see Section \ref{section:prelim}. 
It can also be proved that if $H_n$ is the coradical filtration of $H$, then 
$\operatorname{ord}{\ant^2}|_{H_1}=\operatorname{ord}{\ant^2}$ --see \cite[Lemma 4]{kn:radfordsch}--. 

The observation that follows shows that if the antipode squared is of finite order $m$ 
--in particular if $H$ is finite dimensional--,  then the only case in which it is interesting to consider the AI condition is when $m$ is even. 
 
\begin{obse} \label{obse:oddtrivial}
Let $H$ be a Hopf algebra with order of $\ant^2$ odd. Then $H$ is almost involutive.
Indeed, if $(\ant^2)^{2k-1}=\operatorname{id}$, then $\ant^2=(\ant^{2k})^2$. 
\end{obse}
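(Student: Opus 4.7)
The plan is to exhibit an explicit companion automorphism $\sigma$ directly from the hypothesis. Writing the odd order of $\ant^2$ as $2k-1$, the natural candidate is $\sigma := \ant^{2k}$, and the proof reduces to two quick verifications.

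First, I would check that $\sigma$ is a Hopf algebra automorphism. The antipode $\ant$ is a bijective anti-homomorphism of bialgebras, so every even power of $\ant$ is a genuine Hopf algebra homomorphism; in particular $\sigma = \ant^{2k}$ is. Being a power of the bijection $\ant$, it is also invertible.

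Second, I would verify that $\sigma^2 = \ant^2$. Using the assumption $(\ant^2)^{2k-1} = \id$, one computes
\[
\sigma^2 \;=\; \ant^{4k} \;=\; \ant^{4k-2}\circ \ant^2 \;=\; (\ant^2)^{2k-1}\circ \ant^2 \;=\; \id \circ \ant^2 \;=\; \ant^2,
\]
which is exactly the companion equation. Thus $\sigma$ witnesses that $H$ is AI.

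There is no real obstacle here: conceptually, the point is simply that in the cyclic group generated by $\ant^2$, odd order forces the element $\ant^2$ to be a square, and the even power $\ant^{2k}$ is automatically a Hopf automorphism rather than an anti-automorphism. The interesting content of the definition therefore lies entirely in the case where $\ant^2$ has even order, which is why the subsequent sections of the paper focus on that situation.
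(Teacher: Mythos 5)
Your proposal is correct and is essentially the paper's own argument: the paper's one-line justification ``if $(\ant^2)^{2k-1}=\operatorname{id}$, then $\ant^2=(\ant^{2k})^2$'' is exactly your choice $\sigma=\ant^{2k}$, with the verifications you spell out (even powers of $\ant$ are Hopf automorphisms, and $\ant^{4k}=\ant^2$) left implicit there.
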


\begin{example}\label{exam:first}
\emph{Sweedler's Hopf algebra.}\label{example:sweedler}
As an illustration of an almost involutive Hopf algebra with the antipode squared of even order, we take Sweedler's Hopf algebra $H_4$. As an 
algebra $H_4$ is given by generators and relations as $H_4=\langle g,x:\ g^2=1,\ x^2=0,\ xg+gx=0\rangle$. 
The comultiplication is given by $\Delta(g)=g\otimes g$, $\Delta(x)=x\otimes g+1\otimes x$, and the antipode is  defined by 
$\ant(g)=g$, $\ant(x)=-xg$. Clearly, $\ant^2(g)=g$ and $\ant^2(x)=-x$ and $|\ant^2|=2$. 
\noindent
A direct verification shows that the map $\sigma$ defined by $\sigma:g \mapsto g$, $\sigma: x \mapsto ix$ and extended multiplicatively, 
is a companion automorphism where $i$ is a square root of $-1$. 
The map: $g \mapsto g,\, x \mapsto -ix$ is also a companion automorphism. 
\end{example}

The following result shows that almost involutive non involutive Hopf algebras abound. 

\begin{teo}
Let $H$ be a finite dimensional pointed Hopf algebra. 
If the order $|G(H)|$ is odd, then $H$ is almost involutive. 
In particular this happens if the dimension of $H$ is odd. 
\end{teo}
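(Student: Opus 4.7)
The plan is to reduce the problem to Observation \ref{obse:oddtrivial} by showing that $\operatorname{ord}(\ant^2)$ is odd. The first step is to invoke the cited Radford--Schneider lemma $\operatorname{ord}(\ant^2) = \operatorname{ord}(\ant^2|_{H_1})$ from \cite[Lemma 4]{kn:radfordsch}, which reduces the computation to the first level of the coradical filtration.

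The next step is to describe $\ant^2|_{H_1}$ explicitly. Since $H$ is pointed, $H_0 = \k[G(H)]$, and by the Taft--Wilson theorem $H_1 = H_0 + \sum_{g,h \in G(H)} P_{g,h}$. On $H_0$ the antipode inverts group-like elements, so $\ant^2|_{H_0} = \id$. For $x \in P_{g,h}$, applying the antipode axiom $m(\ant \otimes \id)\Delta(x) = \epsilon(x)\cdot 1 = 0$ to $\Delta(x) = x \otimes g + h \otimes x$ yields $\ant(x) = -h^{-1} x g^{-1}$, and hence $\ant^2(x) = gh^{-1} x g^{-1} h$. Because $\ant^2$ is an algebra automorphism fixing $G(H)$ pointwise, a routine induction on $k$ gives
\[
(\ant^2)^k(x) = (gh^{-1})^k \, x \, (g^{-1}h)^k \quad \text{for all } k \geq 1.
\]

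Setting $n = |G(H)|$, Lagrange's theorem forces $(gh^{-1})^n = (g^{-1}h)^n = 1$, so $(\ant^2)^n = \id$ on every $P_{g,h}$ and also on $H_0$. Therefore $\operatorname{ord}(\ant^2|_{H_1})$ divides $n$, which is odd by hypothesis; by the reduction above, the same holds for $\operatorname{ord}(\ant^2)$, and Observation \ref{obse:oddtrivial} yields that $H$ is almost involutive. The ``in particular'' clause is immediate from the Nichols--Zoeller freeness theorem applied to the sub-Hopf algebra $\k[G(H)] \subseteq H$: $|G(H)|$ divides $\dim H$, so odd dimension forces $|G(H)|$ to be odd. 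The only conceptually delicate step is the reduction from $\ant^2$ to its restriction on $H_1$; once that is granted, the rest of the argument is an elementary calculation with skew-primitives and Lagrange's theorem.
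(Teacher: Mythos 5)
Your proof is correct and follows essentially the same route as the paper: reduce to $H_1$ via the Radford--Schneider lemma, use Taft--Wilson and the formula $\ant^2(x)=gh^{-1}xg^{-1}h$ on skew-primitives to show $\operatorname{ord}(\ant^2)$ divides $|G(H)|$, and conclude with Observation \ref{obse:oddtrivial} and Nichols--Zoeller. The extra details you supply (the derivation of $\ant(x)=-h^{-1}xg^{-1}$ and the explicit appeal to freeness) are consistent with what the paper leaves implicit.
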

\begin{proof}
We know $\operatorname{ord}{\ant^2}=\operatorname{ord}{\ant^2}|_{H_1}$, being $H_n$ the coradical filtration of $H$.
As $H$ is pointed, then $H_1=\K G(H)\oplus\oplus_{g,h\in G(H)}P'_{g,h}(H)$,  where $P'_{g,h}(H)$ is an arbitrary linear complement of $\k(g-h) \subseteq P_{g,h}(H)$ --see \cite[Thm. 0.1]{kn:stefan}--.
It is easy to see that $\ant^2|_{G(H)}=\id$ and if $x\in P_{g,h}$, then $\ant^2(x)=gh^{-1}xg^{-1}h$.
This last formula implies $\ant^{2n}(x)=\left(gh^{-1}\right)^nx\left(g^{-1}h\right)^n$, for all $n$.
Hence $\operatorname{ord}{\ant^2}$ divides $|G(H)|$ and we can apply the observation \ref{obse:oddtrivial}.
The last assertion follows because $|G(H)|$ divides the dimension of $H$.
\end{proof}

If the order of $|G(H)|$ is even, then the above theorem is no longer true -- see examples \ref{ej:8} and \ref{ej:12} below--. 

\bigbreak

Concerning the infinite dimensional situation, we mention that in \cite{kn:andres-walter-mariana}, the authors --together with M. Haim-- 
proved the following general theorem. 

\begin{teo}\emph{[\cite{kn:andres-walter-mariana}, Theorem 8.7.]} 
Let $H$ be a compact quantum group with antipode $\ant$. 
Then, $H$ admits a unique companion automorphism --that in this case is denoted as $\ant_+$ instead of $\sigma$--, that is a positive operator with respect to the natural inner product of $H$, 
and there is a linear functional $\beta: H \rightarrow \mathbb C$ such that $\ant_+(x)=\sum \beta(x_1)x_2\beta^{-1}(x_3)$. 
\end{teo}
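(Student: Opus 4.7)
The plan is to exploit the modular theory attached to the Haar state of the CQG algebra $H$. I would invoke Woronowicz's one-parameter family of characters $\{f_z\}_{z\in\C}$ on $H$, characterized by $f_0=\varepsilon$, the convolution rule $f_z*f_w=f_{z+w}$, and the fundamental identity
$$\ant^2(x)=\sum f_1(x_1)\,x_2\,f_{-1}(x_3),$$
which presents the square of the antipode as an inner coalgebra twist by the modular character $f_1$. The natural candidate for the companion automorphism is then the ``half-twist''
$$\ant_+(x):=\sum f_{1/2}(x_1)\,x_2\,f_{-1/2}(x_3),$$
so that the functional in the statement is $\beta:=f_{1/2}$, with convolution inverse $\beta^{-1}=f_{-1/2}$.

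The next step is to verify that $\ant_+$ is a Hopf algebra automorphism whose square is $\ant^2$. Since $f_{\pm 1/2}$ are algebra characters, the displayed formula produces an algebra homomorphism; because $f_{1/2}*f_{-1/2}=\varepsilon$, the analogous map $\ant_-$ with $\pm$ reversed furnishes a two-sided inverse. A short Sweedler computation, using only coassociativity and the fact that each $f_z$ is scalar-valued, shows that $\ant_+$ also respects the comultiplication. Finally, the convolution rule collapses $\ant_+\compo\ant_+$ into the twist by $f_{1/2}*f_{1/2}=f_1$ on the left and by $f_{-1}$ on the right, which is precisely $\ant^2$; hence $\ant_+$ is a companion automorphism.

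For positivity with respect to the inner product $\langle a,b\rangle=h(b^{*}a)$ determined by the Haar state $h$, the key input is the standard $*$-compatibility $f_z(x^{*})=\overline{f_{-\bar z}(x)}$ of the Woronowicz characters, which at $z=1/2$ yields $f_{1/2}(x^{*})=\overline{f_{-1/2}(x)}$. Combined with the KMS-type relation linking $h$ to the modular automorphism $\ant^2$, this gives $\langle \ant_+(a),b\rangle=\langle a,\ant_+(b)\rangle$ and places the spectrum of $\ant_+$ on the positive reals: $\ant_+$ is the positive square root of $\ant^2$. Uniqueness of the positive companion then reduces to the uniqueness of the positive square root of a positive self-adjoint operator. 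The main obstacle I would expect is not the formal manipulation above but the construction and good algebraic behaviour of the fractional characters $f_z$ at non-integer $z$: that they exist as genuine elements of $H^{\vee}$ --- rather than merely inside some functional-analytic completion --- and that the exponential law $f_z*f_w=f_{z+w}$ holds on the whole of $H$ is where the CQG hypothesis (in particular the Peter--Weyl decomposition of $H$ as a direct sum of finite-dimensional matrix coalgebras) is doing the essential work.
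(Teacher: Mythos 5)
The paper itself offers no proof of this statement: it is imported verbatim from \cite{kn:andres-walter-mariana}, so there is no internal argument to compare against line by line. Taken on its own terms, your sketch is essentially correct and is the standard modular--theoretic route. Writing $T_{\gamma,\delta}(x)=\sum\gamma(x_1)x_2\delta(x_3)$, the composition identity $T_{\gamma',\delta'}\compo T_{\gamma,\delta}=T_{\gamma*\gamma',\,\delta'*\delta}$ gives both the two-sided inverse and $\ant_+^2=\ant^2$ exactly as you claim, and multiplicativity and comultiplicativity follow from $f_{\pm 1/2}$ being mutually convolution-inverse characters. Two points deserve more than a wave of the hand. First, positivity: self-adjointness and positive spectrum of $\ant_+$ for $\langle a,b\rangle=h(b^*a)$ do not follow from the $*$-compatibility of $f_{1/2}$ alone; you need the Schur orthogonality relations to reduce the claim, on each Peter--Weyl block spanned by the coefficients of an irreducible unitary corepresentation $u^\alpha$, to the statement that conjugation by the positive matrix $\bigl(f_{1/2}(u^\alpha_{ij})\bigr)$ is a positive operator for the relevant weighted Hilbert--Schmidt inner product --- true, but a computation, not a formality. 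Second, uniqueness: before invoking uniqueness of positive square roots you must first record that $\ant^2$ itself is a positive operator for this inner product (again a block computation); only then is any positive companion forced to coincide with its unique positive square root. The source the paper defers to proceeds in the opposite order --- it extracts $\ant_+$ operator-theoretically as a positive square root, block by block via cosemisimplicity, and exhibits the implementing functional $\beta$ afterwards --- whereas you construct $\beta=f_{1/2}$ first; the two routes are equivalent, and yours makes the formula $\ant_+(x)=\sum\beta(x_1)x_2\beta^{-1}(x_3)$ immediate at the cost of leaning on Woronowicz's existence theorem for the characters $f_z$ on all of $H^\vee$, which you correctly identify as the essential input supplied by the compact quantum group hypothesis.
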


In the mentioned paper the following explicit example is constructed.

\begin{example} See Woronowicz's \cite{kn:worosu2}. Let $\mu \in \mathbb C$ be a non zero real number such that $|\mu|<1$, and call 
$\operatorname{SU}_\mu(2,\mathbb C)$ the algebra generated by $\{\alpha,\alpha^*, \gamma, \gamma^*\}$, subject to the following relations:
$\alpha^*\alpha+\mu^2 \gamma^*\gamma=1$, $\alpha \alpha^*+\mu^4 \gamma \gamma^*=1$, $\gamma^*\gamma=\gamma\gamma^*$, 
$\mu\gamma\alpha=\alpha\gamma$, $\mu\gamma^*\alpha=\alpha\gamma^*$, $\mu^{-1}\gamma\alpha^*=\alpha^*\gamma$, 
$\mu^{-1}\gamma^*\alpha^*=\alpha^*\gamma^*$. The star structure in $\operatorname{SU}_\mu(2,\mathbb C)$, is defined as being antimultiplicative, 
involutive, conjugate linear and defined on the generators as shown. This algebra admits a natural compatible comultiplication and becomes a
\emph{compact quantum group}
--see \cite{kn:worosu2} for details-- with antipode: $\ant(\alpha)=\alpha^*, \ant(\alpha^*)= \alpha, \ant(\gamma)
=-\mu \gamma, \ant(\gamma^*)=-\mu^{-1} \gamma^*$.  One shows that the companion automorphism $\ant_+$ is given as: 
$\ant_+(\alpha)=\alpha$, $\ant_+(\alpha^*)=\alpha^*$, $\ant_+(\gamma)=|\mu|\gamma$, $\ant_+(\gamma^*)=|\mu|^{-1}\gamma^*$. 
\end{example}

\begin{obse}
\begin{enumerate}
\item Recall that if $\tau:H \rightarrow H$ is a Hopf algebra homomorphism,  then it commutes with $\ant$. In particular 
in the situation above the antipode $\ant$ and the companion automorphism $\sigma$ commute.
\item In general one cannot guarantee the uniqueness of $\sigma$. See the comment at the end of Example \ref{exam:first}. 
\item More examples and non examples will be provided in later sections, here we mention the following: involutive Hopf algebras and the 
quantum enveloping algebra of $\mathfrak{sl} (2)$. 
\end{enumerate}
\end{obse}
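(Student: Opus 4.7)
The observation bundles three claims; I would treat them in turn.

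For item (1), the key input is the classical fact that any Hopf algebra endomorphism of $H$ commutes with $\ant$; specializing to $\tau=\sigma$ yields the statement. The argument I have in mind is the standard one in the convolution algebra $\Hom(H,H)$: if $\tau$ is a Hopf map, then using that $\tau$ is both multiplicative and a coalgebra morphism one computes
$$(\tau\compo\ant)*\tau\,(x) \;=\; \tau\bigl((\ant*\id)(x)\bigr) \;=\; \tau(\epsilon(x)1) \;=\; u\epsilon(x),$$
and symmetrically $\tau*(\ant\compo\tau)=u\epsilon$. Hence $\tau\compo\ant$ is a left, and $\ant\compo\tau$ a right, convolution inverse of $\tau$; associativity of convolution then forces $\tau\compo\ant=\ant\compo\tau$.

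For item (2), non-uniqueness is already witnessed at the end of Example \ref{exam:first}: both multiplicative extensions $x\mapsto ix$ and $x\mapsto -ix$ (fixing $g$) are companion automorphisms of $H_4$. More conceptually, if $\sigma$ is a companion and $\varphi$ is any Hopf automorphism of order dividing $2$ commuting with $\sigma$, then $(\sigma\varphi)^2=\sigma^2\varphi^2=\ant^2$, so $\sigma\varphi$ is again a companion; in $H_4$ the automorphism $\varphi\colon g\mapsto g$, $x\mapsto -x$ plays this role, and the two companions of the example are related in exactly this way.

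For item (3) both examples are immediate. In the involutive case $\ant^2=\id$ so $\sigma=\id$ works. For $U_q(\mathfrak{sl}(2))$ the identities $\ant^2(E)=q^2E$, $\ant^2(F)=q^{-2}F$, $\ant^2(K)=K$ (a direct check from $\ant(E)=-EK^{-1}$, $\ant(F)=-KF$, $\ant(K)=K^{-1}$) suggest defining $\sigma$ as the multiplicative extension of $E\mapsto qE$, $F\mapsto q^{-1}F$, $K\mapsto K$, $K^{-1}\mapsto K^{-1}$. Compatibility with the defining relations $KE=q^2EK$, $KF=q^{-2}FK$ and $[E,F]=(K-K^{-1})/(q-q^{-1})$ is a one-line check using $q\cdot q^{-1}=1$, and compatibility with $\Delta$ follows from the coproduct formulas $\Delta(E)=E\otimes K+1\otimes E$ and $\Delta(F)=F\otimes 1+K^{-1}\otimes F$ by inspection. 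This last verification is the only non-formal point; items (1) and (2) rest entirely on known facts or on the example already given.
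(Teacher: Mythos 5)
Your proposal is correct and is consistent with the paper, which states this observation without any proof: item (1) is exactly the standard convolution-inverse argument you give (with $\tau\compo\ant$ a left and $\ant\compo\tau$ a right convolution inverse of $\tau$), item (2) is witnessed precisely by the two companions $g\mapsto g$, $x\mapsto\pm ix$ of $H_4$ that the paper points to in Example \ref{exam:first}, and item (3) is only a forward pointer to examples. Your explicit verification that $U_q(\mathfrak{sl}(2))$ is almost involutive via $\sigma(E)=qE$, $\sigma(F)=q^{-1}F$, $\sigma(K)=K$ is a correct supplement to a claim the paper merely mentions.
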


\subsection{Constructions of almost involutive Hopf algebras.} \label{subsection:const}\quad

\smallskip
\noindent
The following constructions always produce AI-Hopf algebras.

\begin{enumerate}
\item \emph{Duals and tensor products.} If $H$ is a finite dimensional AI-Hopf algebra, so is its dual. If $H$ and $K$ are AI-Hopf algebras so is $H \otimes K$.   
\item \emph{Matched pairs.} Assume that we have a matched pair of Hopf algebras $(A,H,\tl,\tr)$. If $A$ and $H$ are almost involutive with companion morphisms $\sigma_A$ and $\sigma_H$, so is the bicrossed product $A \bowtie H$ provided that the given companion automorphisms for $A$ and $H$ satisfy the following compatibility conditions:
 \begin{enumerate}
 \item $\sigma_A(x \tr a)= \sigma_H(x) \tr \sigma_A(a)$ 
 \item $\sigma_H(x \tl a)= \sigma_H(x) \tl \sigma_A(a)$.
 \end{enumerate}

This assertion follows easily form the fact that the antipode for the bicrossed product is simply $\ant(ax)=\ant_H(x)\ant_A(a)$ for $a \in A, x \in H$ --see for example \cite{kn:andres-walter-mariana-2} or \cite{kn:Majid} for a detailed description of the structure of $A \bowtie H$--. Hence if we write $\ant_A^2=\sigma_A^2$ and  $\ant_H^2=\sigma_H^2$, the Hopf algebra morphism $\sigma(ax)=\sigma_A(a) \sigma_H(x)$ does the job.
\item \emph{Drinfel'd double.} In particular, if $H$ is almost involutive, so is its Drinfel'd double $D(H)$. This follows from the fact that the Drinfel'd double can be viewed as a bicrossed product (see \cite{kn:andres-walter-mariana-2} or \cite{kn:Majid}) $D(H)=H^{\vee\cop} \bowtie H$ where the structure of matched pair is given by  
$H \stackrel{\triangleleft}{\leftarrow} H\otimes H^\vee\stackrel{\triangleright}{\to}H^\vee$ defined as:
\begin{equation*} 
(x\tr\alpha)(y)=\sum \alpha\left(\ant^{-1}(x_2) y x_1\right),\quad
x\tl \alpha = \sum \alpha\left(\ant^{-1}(x_3)x_1 \right) x_2
,\quad \forall \alpha\in H^\vee,\ x,y\in H.
\end{equation*}  

In this situation one easily verifies that if $\sigma$ is a companion automorphism for $H$, its natural extension to $D(H)$ is a companion automorphism for the double.
\item \label{item:trivial} \emph{Trivial extensions.} 
 \begin{enumerate}
 \item Consider a Hopf algebra $H$ that can be written $H=K \oplus M$
where $K$ and $M$ satisfy the following conditions:
  \begin{enumerate}
  \item $K$ is a sub Hopf algebra of $H$.
  \item $M$ is a $K$--bimodule and a $K$--bicomodule, in other words the following holds:
   \begin{enumerate}
   \item $KM + MK \subseteq M$;
   \item $\Delta(M) \subseteq K \otimes M + M\otimes K$;
   \end{enumerate}
  \item $M$ is invariant by $\ant$.
  \item The extension of $K$ by $M$ is trivial, in other words, $M^2=0$.
  \end{enumerate}
If $m \in M$ we have that $\Delta(m)=\sum h_i \otimes m_i + n_i \otimes k_i$ with $h_i,k_i \in K$ and $m_i,n_i \in M$. Then $m=\sum \varepsilon(m_i)h_i + \varepsilon(k_i)n_i$. If the $h_i$ are linearly independent, we deduce that $\varepsilon(m_i)=0$ and similarly for $n_i$. Hence, if we write write the expression $\Delta(m)=\sum h_i \otimes m_i + n_i \otimes k_i$ with $h_i$ and $k_i$ linearly independent, and apply $\varepsilon \otimes \varepsilon$ to the expression obtain that $\varepsilon(m)=0$. Hence $\varepsilon(M)=0$. 
Assume that $K$ is almost involutive, and call $\sigma_K$ the corresponding companion automorphism.  Suppose also that we can find a linear map $\sigma_M$, with the property that $\sigma_M^2=\ant^2|_M$ and such that:
 \begin{enumerate}
 \item $\sigma_M(hm)=\sigma_K(h)\sigma_M(m)\,,\,\sigma_M(mh)=\sigma_M(m)\sigma_K(h)$.
 \item If $\Delta(m)=\sum h_i \otimes m_i + n_i \otimes k_i \in K \otimes M + M \otimes K$, then $\Delta(\sigma_M(m))=\sum \sigma_K(h_i) \otimes \sigma_M(m_i) + \sigma_M(n_i) \otimes \sigma_K(k_i)$. 
\end{enumerate}
In that situation the map $\sigma(h + m)=\sigma_K(h)+\sigma_M(m)$ is a companion automorphism in $H$. First, it is clear that $\sigma^2=\ant^2$. Morover $\sigma((h+m)(k+n))=\sigma(hk+hn+mk)=\sigma_K(hk)+\sigma_M(hn+mk)=
\sigma_K(h)\sigma_K(k)+ \sigma_K(h)\sigma_M(n)+\sigma_M(m)\sigma_K(k)= (\sigma_K(h)+\sigma_M(m))(\sigma_K(k)+\sigma_M(n))=\sigma(h+m)\sigma(k+n)$. 

\noindent
Also, a direct verification shows that the map $\sigma$ is an automorphism of coalgebras.  

 \item 
Consider the particular case of a trivial extension such that $K=\left\{x\in H:\ \ant^2(x)=x\right\}$. 
Using the techniques developped in the Appendix --see Example \ref{example:general}-- one can prove that in this situation, $H$ is an almost involutive Hopf algebra. 

Sweedler's Hopf algebra is an example. Clearly, $H_4$ can be written in the above manner with: $K=\k+\k g$ and $M=\k x + \k gx$. 
In this situation $\sigma_K=\operatorname{id}$ and $\sigma_M=i\operatorname{id}$ satisfies all the required properties. Other examples of the above situation will appear later --see Section \ref{section:examples}--. 
 \end{enumerate}
\end{enumerate}

\section{Basic results on finite dimensional Hopf algebras.} \label{section:prelim} 

Here we recall basic results and constructions concerning Hopf algebras that will be used later in the study of some of the examples 
--see \cite{kn:Dascalescu} and \cite{kn:radfordbasic} for proofs and details. We concentrate on the basic properties of integrals, 
modular function and modular element. 

There exists $\lambda \in H^\vee$ and $\ell \in H$ that satisfy $\lambda(\ell)=1$, and  invertible elements $a\in G(H)$ and 
$\alpha\in \text{Alg}(H,\K)$ such that for all $x \in H$:
\begin{equation}\label{eq:intl}
\sum x_1\lambda(x_2)=\lambda(x)1\,\quad 
x\ell=\varepsilon(x)\ell,\quad
\sum \lambda(x_1)x_2=\lambda(x)a,\quad
\ell x=\alpha(x)\ell.
\end{equation}

The elements $\lambda$ and $\ell$ that are \emph{left integrals} and the elements $a$ and $\alpha$ are called 
\textit{modular element} and \textit{modular function} respectively. 

\begin{enumerate}
\item 
The modular function and the modular element are of finite order in $H^\vee$ and $H$ respectively and then $\alpha(a)$ is a root of one of 
order less or equal than the dimension of $H$. 
In fact $\operatorname{ord}(\alpha(a))|\gcd\{\operatorname{ord}(\alpha),\operatorname{ord}(a)\}$. 
\item 
The elements $\lambda \compo \ant= \rho$ and $\ant^{-1}(\ell)=r$ are \emph{right integrals} such that $\rho(r)=1$. We have that for all $x \in H$:
\begin{equation}\label{eq:rint}
\sum \rho(x_1)x_2=\rho(x)1,\quad
rx=\varepsilon(x)r,\quad 
\sum x_1\rho(x_2)=\rho(x)a^{-1},\quad 
xr=\alpha^{-1}(x)r.
\end{equation}   
\item 
The elements $\lambda$ and $\ell$ ($\rho$ and $r$) are uniquely determined up to a non zero scalar.
\item
The Hopf algebra $H$ (or $H^\vee$) is unimodular, i.e. a left integral is also a right integral, if and only if $\alpha=\varepsilon$ 
(or $a = 1$) respectively. 
\end{enumerate}

\begin{rems} The following holds --see \cite{kn:schcordoba}--:
\begin{enumerate}
\item 
\begin{equation}\label{eq:antipode}  
\ant(x)=\sum \ell_1 \lambda(x\ell_2), \quad \forall x\in H .
\end{equation}

\item 
From the formula \eqref{eq:antipode}, we can easily deduce that: 
\begin{equation}\label{eq:lambdaS}
\ant(\ell)= \sum \ell_1 \alpha(\ell_2)
\quad\text{and}\quad
\lambda\big(\ant(x)\big) = \lambda(xa),  \quad \forall x\in H . 
\end{equation}
Indeed, applying $\lambda$ we have that: $\lambda(\ant(x))=\sum \lambda(\ell_1) \lambda(x\ell_2)=\sum \lambda(x\lambda(\ell_1)\ell_2) = \lambda(xa)$. The other formula is the dual.
\item 
In the situation above, we have that: 
\begin{eqnarray} \label{eq:bilinear} 
\ant^2(\ell)= \alpha(a) \ell,\quad  
\ant^{2}(r)= \alpha(a) r, \quad
\lambda \compo \ant^2=\alpha(a) \lambda,\quad 
\rho \compo \ant^2=\alpha(a) \rho . 
\end{eqnarray}
\noindent
Indeed, by iteration of the formula \eqref{eq:lambdaS}  we obtain that $\lambda\/\compo\/\ant^2 = a \rhu \lambda \lhu a^{-1}$. 
Being $\lambda \compo \ant^2$ another left integral, we conclude that it has to be a scalar multiple of $\lambda$. 
As a $(a \rhu \lambda \lhu a^{-1})(\ell)=\lambda(a^{-1}\ell a)=\varepsilon(a^{-1}) \lambda(\ell) \alpha(a)=\alpha(a)$ we conclude that $\lambda \compo \ant^2=\alpha(a) \lambda$.  
The other formul\ae\/ are proved similarly. The formul\ae\/ for right integrals are obtained by composition with $\ant^{\pm 1}$. 
Notice that both $\ell$ and $r$ are eigenvectors of $\ant^2$ with  eigenvalue $\alpha(a)$. 
\item 
In particular $\lambda(\ant(\ell))=\alpha(a)$ and $\lambda(r)=1$. 
Indeed, if we put $x=\ell$ in the equation \eqref{eq:lambdaS} we obtain that $\lambda(\ant(\ell))= \lambda(\ell a)= \lambda\big(\alpha(a)\ell\big)=\alpha(a)$. 
Moreover, applying  the equality $\lambda \circ \ant^2=\alpha(a) \lambda$ to the element $r=\ant^{-1}(\ell)$ we deduce that: $\lambda(r)=1$. 
\end{enumerate}
\end{rems}

Next we look at the behaviour of the elements above when transformed with a companion automorphism.

\begin{obse} Let $\sigma$ be a companion automorphism of $H$.
\begin{enumerate}
\item The following holds because $\sigma$ is a Hopf algebra map --the proof is omitted as it is standard--:
\begin{gather}
\label{eqn:first}
\sigma(\ell) =\lambda(\sigma(\ell))\ell ,\quad
\sigma(r)=\rho(\sigma(r))r,\quad
\lambda \circ \sigma = \lambda(\sigma(\ell))\lambda ,\quad 
\rho \circ \sigma = \rho(\sigma(r))\rho , 
\\
\label{eq:alfasigma}
\sigma(a)=a,\quad
\sigma\left(a^{-1}\right)=a^{-1},\quad
\alpha \circ \sigma = \alpha ,\quad  
\alpha^{-1} \circ \sigma = \alpha^{-1}. 
\end{gather}
\item 
From the definition of $\rho$ and $r$ and using that $\sigma$ is a Hopf algebra map we deduce $\rho(\sigma(r))=\lambda(\sigma(\ell))$.
Then using \eqref{eq:bilinear} and \eqref{eqn:first}  we deduce $\lambda(\sigma(\ell))^2=\alpha(a)$.
Hence we have that:
\begin{align}\label{eq:erresigma}
\sigma(\ell)=r_\sigma \ell, \quad   
\sigma(r)=r_\sigma r  ,\quad 
\lambda \circ \sigma= r_\sigma \lambda ,\quad 
\rho \circ \sigma= r_\sigma \rho
\quad \text{with} \quad r_\sigma^2=\alpha(a).
\end{align} 
\item 
We consider the Sweedler's algebra. 
The set $\{1,g,x,gx\}$ is a basis of $H_4$ and we will denote as $\{1^*,g^*,x^*,(gx)^*\}$ its dual basis. 
We get
\[
\ell= (1+g)x;\ r=x(1+g);\  \lambda= x^*; \ \rho=-(gx)^*;\   a=g\,\,;\,\, \alpha:H_4 \rightarrow \K,\ \text{is given by } \alpha(g)=-1,\ \alpha(x)=0.
\]  
Recall that the map $\sigma:H_4 \rightarrow H_4$, defined as $\sigma(g)=g$ and $\sigma(x)=ix$ and extended multiplicatively, is a companion automorphism for $H_4$. 
As $\sigma(\ell)=i\ell$ 
, in this case $r_\sigma=i$. 

\item 
Call $E_{\sigma,\nu}$ the eigenspace of $\sigma$ corresponding to the eigenvalue $\nu$. Then:

\begin{gather}
\ell,r \in E_{\sigma,r_\sigma},\quad 1,a,a^{-1} \in E_{\sigma,1}, \nonumber\\
\bigoplus_{\nu \neq r_\sigma}E_{\sigma,\nu} \subset \operatorname{Ker}(\lambda), \nonumber \\
\label{eqn:fifth}
\bigoplus_{\nu \neq 1}E_{\sigma,\nu} \subset \operatorname{Ker}(\alpha)\cap \operatorname{Ker}\left(\alpha^{-1}\right)\cap \operatorname{Ker}(\varepsilon), \\
E_{\sigma,\nu} \ell = \ell E_{\sigma,\nu} =0,\quad \nu\ne 1. \nonumber
\end{gather} 
First observe that the equations \eqref{eq:alfasigma} and \eqref{eq:erresigma} mean that $\ell,r \in E_{\sigma,\lambda(\sigma(\ell))}$ and $a,a^{-1} \in E_{\sigma,1}$, and is clear that $1 \in E_{\sigma,1}$. 
Moreover, if $x \in E_{\sigma,\nu}$, applying  respectively $\lambda,\alpha,\alpha^{-1},\varepsilon$ to the equality $\sigma(x)=\nu x$, we deduce that 
$r_\sigma\lambda(x)= \nu \lambda(x)$, $\alpha(x)=\nu \alpha(x)$, $\alpha^{-1}(x)=\nu \alpha^{-1}(x)$ and $\varepsilon(x)=\nu\varepsilon(x)$. 
From the first of these equalities we deduce that if $\nu\ne r_\sigma$, then $\lambda(x)=0$, and similarly for the others. 
For the last assertion: if $x \in E_{\sigma,\nu}$, from the equality $x\ell=\varepsilon(x)\ell$ and \eqref{eqn:fifth} we deduce that $x\ell=0$. Similarly we deduce that $\ell x=0$. 
\end{enumerate}
\end{obse}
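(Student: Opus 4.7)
The plan is to verify the five assertions in the order they appear, reading each off directly from the identities already collected in \eqref{eq:intl}, \eqref{eq:alfasigma} and \eqref{eq:erresigma}, plus the standard fact that any Hopf algebra map respects the unit and the counit.

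First I would note that $\sigma(1)=1$ and combine this with \eqref{eq:alfasigma}, which gives $\sigma(a)=a$ and $\sigma\bigl(a^{-1}\bigr)=a^{-1}$, to obtain $1,a,a^{-1}\in E_{\sigma,1}$. The memberships $\ell,r\in E_{\sigma,r_\sigma}$ are literally the content of the first two equations of \eqref{eq:erresigma}.

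Next, for both kernel inclusions, the single idea is this: given an eigenvector equation $\sigma(x)=\nu x$, apply a linear functional $\varphi$ that transforms in a simple way under $\sigma$ and read off the consequence. From \eqref{eq:erresigma}, $\lambda\circ\sigma=r_\sigma\lambda$, so applying $\lambda$ to $\sigma(x)=\nu x$ yields $r_\sigma\lambda(x)=\nu\lambda(x)$, which forces $\lambda(x)=0$ whenever $\nu\ne r_\sigma$. Exactly the same reasoning, now with $\alpha\circ\sigma=\alpha$ and $\alpha^{-1}\circ\sigma=\alpha^{-1}$ from \eqref{eq:alfasigma}, together with $\varepsilon\circ\sigma=\varepsilon$ (since $\sigma$ is a coalgebra map), gives $\alpha(x)=\alpha^{-1}(x)=\varepsilon(x)=0$ whenever $\nu\ne 1$. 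Taking direct sums over the relevant eigenvalues then yields both kernel inclusions.

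For the final assertion, I would combine the kernel inclusion just obtained with the defining identities of $\ell$ in \eqref{eq:intl}: if $\nu\ne 1$ and $x\in E_{\sigma,\nu}$, then $x\ell=\varepsilon(x)\ell=0$ and $\ell x=\alpha(x)\ell=0$, and this extends linearly to $E_{\sigma,\nu}\ell=\ell E_{\sigma,\nu}=0$. I do not anticipate a genuine obstacle: every claim is a one-line verification, and the only point worth tracking is the asymmetry between $\lambda$ and $\rho$ on one side (which detect the eigenvalue $r_\sigma$) and $\alpha,\alpha^{-1},\varepsilon$ on the other (which detect the eigenvalue $1$), an asymmetry that is exactly what produces the two different direct-sum decompositions in the statement.
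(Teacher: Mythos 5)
Your treatment of item (4) of the Observation is correct and is essentially the paper's own argument: the memberships $1,a,a^{-1}\in E_{\sigma,1}$ and $\ell,r\in E_{\sigma,r_\sigma}$ are read off from \eqref{eq:alfasigma} and \eqref{eq:erresigma}, the kernel inclusions follow by applying $\lambda,\alpha,\alpha^{-1},\varepsilon$ to $\sigma(x)=\nu x$, and the last assertion follows from $x\ell=\varepsilon(x)\ell$ and $\ell x=\alpha(x)\ell$ in \eqref{eq:intl}. That part needs no repair.

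The genuine gap is that you take \eqref{eq:erresigma} as one of the ``identities already collected,'' when it is in fact item (2) of the very statement under proof, and it is the only place where the defining property $\sigma^2=\ant^2$ of a companion automorphism is actually used --- your argument never invokes it, which is the tell-tale sign. Two things must be established. First, that the four scalars in \eqref{eqn:first} coincide, i.e.\ $\rho(\sigma(r))=\lambda(\sigma(\ell))$: this uses that $\sigma$ commutes with $\ant$ (being a Hopf algebra map), so from $\rho=\lambda\compo\ant$ and $r=\ant^{-1}(\ell)$ one gets $\rho(\sigma(r))=\lambda\big(\ant\,\sigma\,\ant^{-1}(\ell)\big)=\lambda(\sigma(\ell))$. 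Second, that this common scalar $r_\sigma$ satisfies $r_\sigma^2=\alpha(a)$: iterating $\lambda\compo\sigma=r_\sigma\lambda$ gives $\lambda\compo\sigma^2=r_\sigma^2\lambda$, while \eqref{eq:bilinear} gives $\lambda\compo\ant^2=\alpha(a)\lambda$, and equating these via $\sigma^2=\ant^2$ yields the claim. Without this step the symbol $r_\sigma$ in your argument is undefined (or at least unconstrained), and the assertion $\ell,r\in E_{\sigma,r_\sigma}$ with a single eigenvalue $r_\sigma$ of square $\alpha(a)$ is not justified.
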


\section{Description of the almost involutive Hopf algebras up to dimension 15.}\label{section:examples}

In the following examples we will often have to deal with a Hopf algebra $H$ with a given group-like element $g$ and a $(g,1)$-primitive element $x$, then $\ant(g)=g^{-1}$, $\ant(x)=-xg^{-1}$,  $\ant^2(g)=g$ and $\ant^2(x)=gxg^{-1}$.

\begin{example} [Example 1, in \cite{kn:radfordbasic}] \label{example:rad}
Let be $\omega\in\k$ a primitive root of order $n$ of 1 and 
\[
H=\left\langle g,x,y:\ g^n=1,\ x^n=0,\ y^n=0,\  gx-\omega^{-1}xg=0,\ gy-\omega yg=0,\ xy-\omega yx=0 \right\rangle.
\] 
$H$ is a Hopf algebra when it is equipped with coalgebra structure given by $g$ being a group-like element and $x,y$ $(g,1)$-primitive elements. 
The set $\{g^rx^py^s:\ 0\leq r,p,s\leq n-1\}$ is a basis of $H$, so $\dim H=n^3$.
We have $\ant^2(g)=g$, $\ant^2(x)=\omega^{-1} x$ and $\ant^2(y)=\omega y$, hence the order of $\ant^2$ is $n$. 

Using again the same method than before but with more labour, we can prove that $H$ is an almost involutive Hopf algebra.

Here we can also obtain four companion automorphisms $\sigma$ by direct inspection, defining $\sigma(g)=g$, $\sigma(x)=\pm \nu^{-1} x$ and $\sigma(y)=\pm \nu y$, being $\nu\in\k$ such that $\nu^2=\omega$.
\end{example}


\begin{example}[Hopf algebras of dimension 8] \label{ej:8}
If $H$ is a 8-dimensional not semisimple Hopf algebra, then Stefan shows in \cite{kn:stefan} that $H$ is isomorphic to one and only one of the following list
\[
A_{C_2},\quad A'_{C_4},\quad A''_{C_4},\quad A'''_{C_4,\omega},\quad A_{C_2\times C_2},\quad \left( A''_{C_4}\right)^*,
\]
being
\begin{enumerate}
\item \label{item:ac2}
$A_{C_2}=\left\langle g,x,y:\ g^2=1,\ x^2=0,\ y^2=0,\ gx + xg=0,\ gy+yg=0,\ xy+ yx=0 \right\rangle$, $g$ is a group-like element and $x,y$ are $(g,1)$-primitive elements.
\item \label{item:a1c4}
$A'_{C_4}=\left\langle g,x:\ g^4=1,\ x^2=0,\ gx + xg=0 \right\rangle$, $g$ is a group-like element and $x$ is a $(g,1)$-primitive element.
\item \label{item:a2c4}
$A''_{C_4}=\left\langle g,x:\ g^4=1,\ x^2=g^2-1,\ gx + xg=0 \right\rangle$, $g$ is a group-like element and $x$ is a $(g,1)$-primitive element.
\item \label{item:a3c4}
$A'''_{C_4,\omega}=\left\langle g,x:\ g^4=1,\ x^2=0,\ gx- \omega xg=0 \right\rangle$, $g$ is a group-like element, $x$ is a $(g,1)$-primitive element and $\omega\in\K$ is a primitive root of unity of order 4.
\item \label{item:a2c2c2}
$A_{C_2\times C_2}=\left\langle g,h,x:\ g^2=1,\ h^2=1,\ x^2=0,\ gx + xg=0,\ hx + xh=0,\ gh-hg=0 \right\rangle$, $g$ and $h$ are group-like elements and $x$ is a $(g,1)$-primitive element.
\end{enumerate}
The algebras $A_{C_2}, A'_{C_4}, A'''_{C_4,\omega}$ and $A_{C_2\times C_2}$ are almost involutive. We give the companion automorphism $\sigma$ by its values in the generators.
\begin{itemize}
\item
$A_{C_2}$, $\sigma(g)=g$, $\sigma(x)=ix$ and $\sigma(y)=iy$. Observe that this is the case $n=2$ in the example \ref{example:rad}.
\item
$A'_{C_4}$, $\sigma(g)=g$ and $\sigma(x)=ix$.
\item
$A'''_{C_4,\omega}$, $\sigma(g)=g$ and $\sigma(x)=\omega x$.
\item
$A_{C_2\times C_2}$, $\sigma(g)=g$, $\sigma(h)=h$ and $\sigma(x)=ix$.
\end{itemize}

Notice, that of these cases, the situation described in \eqref{item:a1c4}, \eqref{item:a3c4} and \eqref{item:a2c2c2} fit into the pattern of the results appearing in Subsection \ref{subsection:const}, \eqref{item:trivial}.

Now we consider the algebra $A''_{C_4}$. 
The set $\left\{1,g,g^2,g^3,x,gx,g^2x,g^3x \right\}$ is a basis of $A''_{C_4}$ with decomposition $E_{\ant^2,1}=\langle1,g,g^2,g^3\rangle_{\k}$, $E_{\ant^2,-1}=\langle x,gx,g^2x,g^3x\rangle_{\k}$ and with the following normalized integrals:
\[
\ell=\left(1+g+g^2+g^3\right)x 
,\quad r=\left(-1+g-g^2+g^3\right)x
,\quad \lambda=x^*
,\quad \rho=\left(g^3x\right)^*
.
\]
The modular element is $a=g$ and the modular function $\alpha$ is defined by $\alpha(g)=-1$ and $\alpha(x)=0$.

Suppose there exists a companion automorphism $\sigma$ in $A''_{C_4}$. From \eqref{eq:alfasigma} we get $\sigma(g)=g$. 
Then the condition $gx + xg=0$ implies $\sigma(x)=b_0x+b_1gx+b_2g^2x+b_3g^3x$, for some $b_0,b_1,b_2,b_3\in\K$.
Using \eqref{eq:erresigma} for $\lambda$ and $\rho$ we have $\sigma(x)=r_\sigma x+b_1gx+b_2g^2x$, where $r_\sigma^2=-1$. 
Now using \eqref{eq:erresigma} for $\ell$ and $r$ we conclude $\sigma(x)=r_\sigma x$.
So $\sigma$ verifies $\sigma(g)=g$ and $\sigma(x)=r_\sigma x$, but then $\sigma$ can not preserve the relation $x^2=g^2-1$. 
Hence we have shown that  $A''_{C_4}$ is not almost involutive. As the property of being almost involutive is preserved by duality, we deduce that $\left( A''_{C_4}\right)^*$ is also not almost involutive. 
Note that $A''_{C_4}$ is pointed but $\left( A''_{C_4}\right)^*$ it is not --see \cite{kn:stefan}--. 
\end{example}

\begin{example}[Hopf algebras of dimension 12] \label{ej:12}
Let $H$ be a Hopf algebra of dimension $12$. 
Natale shows in \cite{kn:natale} that if $H$ is non semisimple, then $H$ or $H^\vee$ is pointed; she also shows that if $H$ is pointed, then it is isomorphic to one and only one of the following list:
\begin{itemize}
\item
$A_{0}=\left\langle g,x:\ g^6=1,\ x^2=0,\ gx + xg=0 \right\rangle$, $g$ is a group-like element and $x$ is $(g,1)$-primitive.
\item
$A_{1}=\left\langle g,x:\ g^6=1,\ x^2=1-g^2,\ gx + xg=0 \right\rangle$, $g$ is a group-like element and $x$ is  $(g,1)$-primitive.
\item
$B_{0}=\left\langle g,x:\ g^6=1,\ x^2=0,\ gx + xg=0 \right\rangle$, $g$ is a group-like element and $x$ is $\left(g^3,1\right)$-primitive.
\item
$B_{1}=\left\langle g,x:\ g^6=1,\ x^2=0,\ gx -\omega xg=0 \right\rangle$, $g$ is a group-like element, $x$ is $\left(g^3,1\right)$-primitive and $\omega\in\K$ is a primitive root of unity of order 6.
\end{itemize}

The Hopf algebras in this list satisfy the following: $A_0^*=B_1$, $B_0^*=B_0$ and $A_1^*$ is not pointed. Moreover, $A_0$, $B_0$ and $B_1$ are of the type appearing in Subsection \ref{subsection:const}, \eqref{item:trivial}. 

The algebras in this list appear analogous to the ones in dimension 8, so we can expect that they have similar properties. Indeed, we have that $A_0$, $B_0$ and $B_1$ are almost involutive but $A_1$ --and so its dual-- is not.
The proof that $A_0$, $B_0$ and $B_1$ are almost involutive follows a similar pattern than the eight dimensional case.
For $A_1$, if it has a companion morphism $\sigma$, then similarly than for $A''_{C_4}$ we obtain that 
$\sigma(g)=g$ and also prove the existence of scalars $a,b\in\K$ such that $\sigma(x)=r_\sigma x + a\left(gx-g^3x \right)+b\left(gx^2-g^4x \right)$.
Being $\sigma$ a Hopf algebra map, then $\ant(\sigma(x))=\sigma(\ant(x))$, and this relation implies $\sigma(x)=r_\sigma x$ and we obtain the same contradiction than for $A''_{C_4}$.
\end{example}

\begin{rema}
The Hopf algebras of dimension 13, 14 and 15 are semisimple --see \cite{kn:beattie-gaston}-- and the ones of dimension $n\leq 11$ and $n\ne 8$ are semisimple or Taft algebras --see \cite{kn:stefan}--.
Semisimple Hopf algebras are involutive and in the example \ref{example:taft} below we show that the Taft algebra is almost involutive. 
Hence, of the Hopf algebras of dimension $n\leq 15$, the only cases when there are  non almost involutive examples is for $n=8$ or $n=12$.
\end{rema}

\section{Appendix: square roots of finite order linear automorphisms.}
We start with some elementary considerations about the square root of a finite order linear automorphism $D:V \rightarrow V$ where $V$ is a finite dimensional vector space over a field $\mathbb K$. 
We call $m=|D|$ the order of $D$. 

Given such $m$, we take  $r \in \mathbb K$ with the property that its order is $|r|=2m$, if $m$ is even or  $|r|=m$, if $m$ is odd.
We call $q=r^2$. Notice that $|q|=m$. 

Define $\mathcal E=\big\{0 \leq i \leq m-1: q^i \in \operatorname{Spec}(D)\big\}$; then $V=\bigoplus_{i \in \mathcal E}E_{D,q^i}$ where $E_{D,q^i}=\{x \in H: Dx=q^i x\}$.

Assume that $\sigma:V \rightarrow V$ is a linear automorphism of $V$ such that $\sigma^2=D$. 
Any such $\sigma$ will satisfy that $\sigma^2|_{E_{D,q^i}}=q^i\operatorname{id}$ and hence, the minimal polynomial $m_{\sigma|_{E_{D,q^i}}} | (t^2-q^i)=t^2-r^{2i}=(t-r^i)(t+r^i)$. 
Then for all $i \in \mathcal E$ we can find two subspaces $E_{\sigma,r^i},E_{\sigma,-r^i} \subseteq E_{D,q^i}$ --one of them could be $\{0\}$-- such that 
$E_{D,q^i}= E_{\sigma,r^i} \oplus E_{\sigma,-r^i}$ and $\sigma|_{E_{\sigma,\pm r^i}}=\pm r^i\operatorname{id}$.

Conversely, if for every $i \in \mathcal E$ we are given an arbitrary direct sum decomposition of $E_{D,q^i}=V_{+,i} \oplus V_{-,i}$ as above, 
then we can define an operator $\sigma: V\rightarrow V$, by requiring that its restriction to each of the summands are $\pm r^i\operatorname{id}$. 
In other  words, if we write $x \in E_{D,q^i}$ as $x=x_+ + x_-$ with $x_\pm \in V_{\pm,i}$, then $\sigma(x)=r^i x_{+} - r^i x_-$. 

By construction $\sigma^2=D$ on $E_{D,q^i}$ for all $i\in \mathcal E$ and then $\sigma$ is a square root of $D$ on all of $V$. For such $\sigma$ we have that for all $i \in \mathcal E$: $E_{\sigma,\pm r^i}=V_{\pm,i}$.
  
Hence, to define a \emph{linear transformation} that is a square root of $D$, we have to take for each eigenspace $E_{D,q^i}$ of $D$ with $i \in \mathcal E$, a decomposition on two subspaces $E_{D,q^i}=V_{+,i} \oplus V_{-,i}$. 
Given the decomposition, a square root is defined by the equations: $\sigma|_{V_{\pm,i}}=\pm r^i\operatorname{id}$.

\subsection{The case of an automorphism of Hopf algebras}

Assume now that $H$ is a finite dimensional Hopf algebra and that $D:H \rightarrow H$ is an \emph{automorphism} of Hopf algebras of order $|D|=m$. 

We want to find conditions for the pair of subspaces $V_{+,i}$ and $V_{-,i}$ that guarantee that the $\sigma$ thus defined is a Hopf algebra automorphism. The elementary results that we present below, are probably well known, we wrote them for the lack of an adequate reference. 

\begin{obse} \label{obse:Dcase}In the situation above with $H$ a finite dimensional Hopf algebra and $D$ a linear automorphism of finite order $m$. 
\begin{enumerate} 
\item $D$ is a morphism of algebras if and only if the following holds: 
\begin{enumerate}
\item $1 \in E_{D,1}$;\\ 
\item For $i,j \in \mathcal E$, $E_{D,q^i}E_{D,q^j} \subseteq \begin{cases}E_{D,q^{i+j}}\quad &\text{if}\quad i+j<m;\\ E_{D,q^{i+j-m}} &\text{if}\quad i+j \geq m.\end{cases}$
\end{enumerate}
\item $D$ is a morphism of coalgebras if and only if the following holds: 
\begin{enumerate}
\item If $0 \neq i \in \mathcal E$, then $\varepsilon(E_{D,q^i})=0$ ;
\item For $i \in \mathcal E$: 
\[
\Delta(E_{D,q^i}) \subseteq \bigoplus_{\{a,b \in \mathcal E: a+b=i\}}(E_{D,q^a} \otimes E_{D,q^b}) \quad \oplus \bigoplus_{\{a,b \in \mathcal E: a+b=i+m\}}(E_{D,q^a} \otimes E_{D,q^b}).
\]
\end{enumerate}
\end{enumerate}
Observe that in the considerations above we used that if $r,s \in \mathcal E$, then $0 \leq r,s \leq m-1$, and then $0 \leq r+s \leq 2m-2$. 
\end{obse}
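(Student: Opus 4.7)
The plan is to prove both biconditionals separately, in each case handling necessity and sufficiency by exploiting the eigenspace decomposition $H = \bigoplus_{i \in \mathcal{E}} E_{D,q^i}$. For the coalgebra part, I will also use the induced decomposition of $H \otimes H$ as the direct sum of the tensor products $E_{D,q^a} \otimes E_{D,q^b}$, which is precisely the eigenspace decomposition of $D \otimes D$, with eigenvalue $q^{a+b}$ attached to the $(a,b)$-summand.

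For part (1), the necessity of (a) and (b) is immediate from multiplicativity: $D(1)=1$ places $1$ in $E_{D,1}$, and for $x \in E_{D,q^i}$, $y \in E_{D,q^j}$ one has $D(xy)=q^{i+j}xy$, so $xy$ lies in the eigenspace indexed by $q^{i+j}=q^{(i+j)\bmod m}$, using $q^m=1$. Conversely, assuming (a) and (b), I decompose arbitrary elements into their eigencomponents and check multiplicativity termwise: for $x_i \in E_{D,q^i}$ and $y_j \in E_{D,q^j}$, condition (b) places $x_iy_j$ in a single eigenspace $E_{D,q^{(i+j)\bmod m}}$, which gives $D(x_iy_j)=q^{i+j}x_iy_j=D(x_i)D(y_j)$; extending by linearity and invoking (a) completes the argument.

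For part (2), the counit condition is the easy piece: $\varepsilon\circ D=\varepsilon$ applied to $x\in E_{D,q^i}$ yields $(1-q^i)\varepsilon(x)=0$, which forces (a) whenever $i\neq 0$, and conversely (a) directly implies $\varepsilon\circ D = \varepsilon$ on every homogeneous component since $\varepsilon=\varepsilon\circ D$ on $E_{D,1}$ automatically. The comultiplication condition is the heart of the argument. For necessity, writing $\Delta(x) = \sum_{a,b\in\mathcal E} z_{a,b}$ for $x \in E_{D,q^i}$ with $z_{a,b}\in E_{D,q^a}\otimes E_{D,q^b}$, the identity $(D\otimes D)\Delta(x)=\Delta D(x)=q^i\Delta(x)$ forces each $z_{a,b}$ either to vanish or to satisfy $q^{a+b}=q^i$; since $0\le a+b\le 2m-2$, this scalar equation reduces to $a+b=i$ or $a+b=i+m$, which is exactly (b). Sufficiency is obtained by running the same calculation in reverse on each eigencomponent and summing.

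The whole proof is essentially book-keeping with weights $q^i$, so I do not expect a substantive obstacle. The only mild subtlety is the wrap-around coming from $q^m=1$: two index pairs $(a,b)$ whose sums differ by $m$ produce the same eigenvalue for $D\otimes D$, and this is exactly why (b) must allow both $a+b=i$ and $a+b=i+m$ rather than a single slot. Ensuring this $\mathbb Z/m$-grading is handled consistently is the one point requiring care.
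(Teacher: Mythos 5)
Your proof is correct and is exactly the eigenspace bookkeeping the paper has in mind: the paper states this as an unproved Observation, and your argument (decomposing $H$ and $H\otimes H$ into the summands $E_{D,q^a}$ and $E_{D,q^a}\otimes E_{D,q^b}$, comparing weights termwise, and using that sums of exponents land in $\{i, i+m\}$ because $0\le a+b\le 2m-2$) supplies precisely the omitted details. The one subtlety you flag --- that distinct pairs $(a,b)$ with $a+b\equiv i \pmod m$ give the same eigenvalue of $D\otimes D$, yet the finer decomposition into tensor-product summands is still direct, which is what lets you kill each $z_{a,b}$ separately --- is handled correctly.
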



\begin{theo}\label{theo:algebraconditions} 
Let $A$ be an algebra and let $D:A \rightarrow A$ be an automorphism of algebras of finite order $m$. Define as above, $q,r$, $\mathcal E$ and $E_{D,q^i}$ --for $i \in \mathcal E$--. For each $i \in \mathcal E$ when we take an arbitrary decomposition of $E_{D,q^i}=V_{+,i}\oplus V_{-,i}$ and define a linear transformation $\sigma$ on $A$ as: $\sigma|_{V_{\pm,i}}=\pm r^i\operatorname{id}$ for all $i \in \mathcal E$, then $\sigma^2=D$. Moreover, $\sigma$ is an automorphism of algebras if and only if the following conditions hold:
\begin{enumerate}
\item $1 \in V_{+,0}$\emph{;}
\item 
       \begin{enumerate}
\item If $0 \leq i+j \leq m-1$, then $V_{+,i} V_{+,j}+ V_{-,i}V_{-,j} \subseteq V_{+,i+j}$ and $V_{+,i} V_{-,j}+ V_{-,i}V_{+,j} \subseteq V_{-,i+j}$.
\item If $m \leq i+j \leq 2m-2$ then\emph{:}
               \begin{enumerate}
\item If $m$ is even, then $V_{+,i} V_{+,j}+ V_{-,i}V_{-,j} \subseteq V_{-,i+j-m}$ and $V_{+,i} V_{-,j}+ V_{-,i}V_{+,j} \subseteq V_{+,i+j-m}$\emph{;}
\item If $m$ is odd, then $V_{+,i} V_{+,j}+ V_{-,i}V_{-,j} \subseteq V_{+,i+j-m}$ and $V_{+,i} V_{-,j}+ V_{-,i}V_{+,j} \subseteq V_{-,i+j-m}$.   
               \end{enumerate}
         \end{enumerate}
\end{enumerate}
\end{theo}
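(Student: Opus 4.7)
The plan is to reduce the claim to a case analysis on the decompositions $E_{D,q^i} = V_{+,i} \oplus V_{-,i}$, since $A = \bigoplus_{i \in \mathcal E}(V_{+,i} \oplus V_{-,i})$ and both of the relevant maps $\sigma$ and $\mu \circ (\sigma\otimes\sigma)$ are linear. The fact that $\sigma^2 = D$ was already observed in the general linear-algebra discussion preceding the statement, so it remains only to characterise when $\sigma$ is a morphism of algebras.

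First I would deal with the unit. Since any algebra morphism must send $1$ to $1$, and $\sigma(x) = r^0 x = x$ for $x \in V_{+,0}$ while $\sigma(x) = -x$ for $x \in V_{-,0}$, the identity $\sigma(1) = 1$ is equivalent to $1 \in V_{+,0}$. This gives condition (1) in both directions.

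Next I would prove both implications of the multiplicativity conditions simultaneously by a single computation. Pick $i,j \in \mathcal E$, $\epsilon,\delta \in \{+,-\}$ and $x \in V_{\epsilon,i}$, $y \in V_{\delta,j}$. By Observation \ref{obse:Dcase}(1), which applies because $D$ is an algebra map, the product $xy$ lies in $E_{D,q^{i+j}}$ if $i+j \le m-1$ and in $E_{D,q^{i+j-m}}$ if $m \le i+j \le 2m-2$. Decomposing $xy = u_+ + u_-$ in the corresponding $V_{+,\cdot}\oplus V_{-,\cdot}$, we have $\sigma(xy) = r^{k}u_+ - r^{k}u_-$ where $k$ is $i+j$ or $i+j-m$, while on the other hand $\sigma(x)\sigma(y) = \epsilon\delta\, r^{i+j}(u_+ + u_-)$. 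Here the critical input is that $r^m = -1$ if $m$ is even and $r^m = 1$ if $m$ is odd — this follows immediately from the choice $|r| = 2m$ or $|r| = m$ respectively made in the paragraph defining $r$. So in the regime $i+j \ge m$ we may rewrite $r^{i+j} = (-1)^{m+1}r^{i+j-m}$ and compare coefficients of $u_+$ and $u_-$. In every case $\sigma(xy) = \sigma(x)\sigma(y)$ forces exactly one of $u_+$ or $u_-$ to vanish; which one depends on the parity of $m$ and on the sign $\epsilon\delta$, and a short bookkeeping gives precisely the four inclusions listed as 2(a) and 2(b)(i), (ii). Conversely, assuming these inclusions, the same comparison of coefficients shows $\sigma(xy) = \sigma(x)\sigma(y)$ on each pair of homogeneous components, and by bilinear extension on all of $A\otimes A$.

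The only real obstacle is keeping the signs and the index shifts straight: the necessity direction requires one to read off the correct eigenspace of $xy$ from the equation $r^{k}u_+ - r^{k}u_- = \epsilon\delta\,r^{i+j}(u_+ + u_-)$ in four sub-cases (two for the parity of $m$ and two for the sign $\epsilon\delta$), and the sufficiency direction reverses the same computation. No further machinery is needed; everything is a direct consequence of the defining relation $\sigma|_{V_{\pm,i}} = \pm r^i\operatorname{id}$ together with $r^m = (-1)^{m+1}$.
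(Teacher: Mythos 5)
Your proposal is correct and follows essentially the same route as the paper's own proof: reduce to homogeneous elements $x\in V_{\epsilon,i}$, $y\in V_{\delta,j}$, write $xy=(xy)_++(xy)_-$ in $V_{+,k}\oplus V_{-,k}$ with $k\equiv i+j \pmod m$, and compare coefficients using $r^m=-1$ for $m$ even and $r^m=1$ for $m$ odd; the only (harmless) difference is that the paper separates out the trivial case $xy=0$ explicitly, while you fold it into the general comparison.
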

\begin{proof}
\begin{enumerate}
\item \emph{Conditions for the unit.} The unit element, $1 \in E_{D,q^0}=E_{D,1}$ and as we want that $\sigma(1)=1$, in the decomposition of $E_{D,1}=V_{+,0} \oplus V_{-,0}$, $1 \in V_{+,0}$. 
\item \emph{Multiplicativity.} It is enough to prove that for all $i,j \in \mathcal E, x \in V_{\pm,i}, y \in V_{\pm,j} \Rightarrow \sigma(xy)=\sigma(x)\sigma(y)$. Being $D(xy)=q^{i+j}xy$, we have two alternatives:
\begin{enumerate}
\item If $xy=0$, then $\sigma(x)\sigma(y)=(\pm r^ix)(\pm r^jy)=0=\sigma(xy)$.
\item If $xy \neq 0$, $q^{i+j}\in \operatorname{Spec}(D)$ and $\exists k \in \mathcal E, i+j \equiv k(\!\!\!\mod m)$. 

Then: $k= \begin{cases}i+j\quad &\text{for}\,\, 0 \leq i+j \leq m-1;\\
i+j-m\quad &\text{for}\,\, m \leq i+j\leq 2m-2.\end{cases}$

As $xy \in E_{D,q^k}=V_{+,k} \oplus V_{-,k}$ for $k \in \mathcal E$, we can find $(xy)_\pm \in V_{\pm,k}$, such that: 
$xy=(xy)_++(xy)_-\,,\, \sigma(xy)=r^k(xy)_+-r^k (xy)_{-}$.\\
We consider the following alternatives.\\
(A)  $x \in V_{+,i}$ and $y \in V_{+,j}$ or $x \in V_{-,i}$ and $\in V_{-,j}$. 

In this case $\sigma(x)\sigma(y)=r^{i+j}xy=r^{i+j}(xy)_{+}+r^{i+j}(xy)_{-}$.
Now, if $i+j=k \leq m-1$ then $r^{i+j}=r^k$ and in accordance with the above formul\ae\  the multiplicativity holds if and only if $(xy)_{-}= 0$.

If $m \leq i+j=k+m$, then $r^{i+j}=r^kr^m=\begin{cases}-r^k &\quad \text{if $m$ is even}\\ \,\,\,\, r^k &\quad \text{if $m$ is odd}.\end{cases}$\\
Then, the multiplicativity holds if and only if
$\begin{cases}(xy)_+=0 &\,\text{if $m$ is even}\\ (xy)_-=0 &\,\text{if $m$ is odd}.\end{cases}$\\
(B) $x \in V_{+,i}$ and $y \in V_{-,j}$ or $x \in V_{-,i}$ and $y \in V_{+,j}$.
In this case $\sigma(x)\sigma(y)=-r^{i+j}xy=-r^{i+j}(xy)_+-r^{i+j}(xy)_-$. Now, if $i+j=k \leq m-1$ then $r^{i+j}=r^k$ and the multiplicativity holds if and only if $(xy)_+ = 0$.

If $m \leq i+j=k+m$, then $r^{i+j}=r^kr^m=\begin{cases}-r^k &\quad \text{if $m$ is even}\\ \,\,\,\,r^k &\quad \text{if $m$ is odd}.\end{cases}$\\
Hence, in this situation the multiplicativity holds if and only if
$\begin{cases}(xy)_-=0 &\,\text{if $m$ is even}\\ (xy)_+=0 &\,\text{if $m$ is odd}.\end{cases}$
\end{enumerate}
\end{enumerate}
\end{proof}


As we are dealing with finite dimensional objects, we may proceed by duality and obtain the following result:
  
\begin{theo}\label{theo:coalgebraconditions} 
Let $C$ be a coalgebra and let $D:C \rightarrow C$ be a automorphism of coalgebras of finite order $m$. 
Define as above, $q,r$, $\mathcal E$ and $E_{D,q^i}$ --for $i \in \mathcal E$--. 
For each $i \in \mathcal E$ when we take an arbitrary decomposition of $E_{D,q^i}=V_{+,i}\oplus V_{-,i}$ and define a linear transformation $\sigma$ on $C$ as: $\sigma|_{V_{\pm,i}}=\pm r^i\operatorname{id}$, then $\sigma^2=D$. 

  Moreover, $\sigma$ is an automorphism of coalgebras if and only if the following conditions hold:
  \begin{enumerate}
  \item $\varepsilon(V_{-,0})=0$ 
  \item 
\[\Delta(V_{+,i}) \subseteq \bigoplus_{\{a,b\in \mathcal E:a+b=i\}}(V_{+,a} \otimes V_{+,b} + V_{-,a} \otimes V_{-,b}) \oplus \begin{cases}\bigoplus_{\{a,b\in \mathcal E:a+b=i+m\}}(V_{+,a} \otimes V_{-,b} + V_{-,a} \otimes V_{+,b})\,,\, \text{m even};\\\bigoplus_{\{a,b\in \mathcal E:a+b=i+m\}}(V_{+,a} \otimes V_{+,b} + V_{-,a} \otimes V_{-,b})\,,\, \text{m odd}.\end{cases}\]
\[\Delta(V_{-,i}) \subseteq \bigoplus_{\{a,b\in \mathcal E:a+b=i\}}(V_{+,a} \otimes V_{-,b} + V_{-,a} \otimes V_{+,b}) \oplus \begin{cases}\bigoplus_{\{a,b\in \mathcal E:a+b=i+m\}}(V_{+,a} \otimes V_{+,b} + V_{-,a} \otimes V_{-,b})\,,\, \text{m even};\\\bigoplus_{\{a,b\in \mathcal E:a+b=i+m\}}(V_{+,a} \otimes V_{-,b} + V_{-,a} \otimes V_{+,b})\,,\, \text{m odd}.\end{cases}
\]
\qed
  \end{enumerate}
 \end{theo}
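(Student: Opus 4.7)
My plan is to follow the duality route suggested by the authors. Working in the finite dimensional setting, $C^\vee$ is an algebra and $D^\vee:C^\vee\to C^\vee$ is an algebra automorphism of the same order $m$. Through the natural pairing, $E_{D^\vee,q^i}$ is canonically identified with the annihilator of $\bigoplus_{j\neq i}E_{D,q^j}$ (so with $(E_{D,q^i})^\vee$), and the given decomposition $E_{D,q^i}=V_{+,i}\oplus V_{-,i}$ induces a dual decomposition $E_{D^\vee,q^i}=W_{+,i}\oplus W_{-,i}$, where $W_{\pm,i}$ consists of the functionals on $C$ vanishing on $V_{\mp,i}$ and on every $E_{D,q^j}$ with $j\neq i$.

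First I would verify that the linear map $\tau:C^\vee\to C^\vee$ built from the $W_{\pm,i}$ by the recipe preceding Theorem \ref{theo:algebraconditions} coincides with the transpose $\sigma^\vee$; this is immediate because both have the same eigenvalue/eigenspace data. Consequently $\sigma$ is a coalgebra automorphism if and only if $\sigma^\vee$ is an algebra automorphism, so it suffices to apply Theorem \ref{theo:algebraconditions} to $(C^\vee,D^\vee,W_{\pm,i})$ and translate each of its conditions back through the pairing.

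The unit condition $1\in W_{+,0}$, with $1\in C^\vee$ identified with $\varepsilon$, requires that $\varepsilon$ vanish on $V_{-,0}$ and on every $E_{D,q^j}$ for $j\neq 0$. The latter is automatic from Observation \ref{obse:Dcase}(2)(a), since $D$ is already a coalgebra morphism, so only $\varepsilon(V_{-,0})=0$ survives. For the multiplicativity conditions on the $W$'s, the formula $(fg)(x)=(f\otimes g)(\Delta x)$ shows that an inclusion of the form $W_{\epsilon,i}\cdot W_{\epsilon',j}\subseteq W_{\delta,k}$ is equivalent to the statement that the component of $\Delta(V_{\delta,k})$ living in $E_{D,q^i}\otimes E_{D,q^j}$ is contained in the corresponding sum of $V_{\pm,i}\otimes V_{\pm,j}$ subspaces. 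Running through the four sign patterns $(\epsilon,\epsilon')$ and collecting the constraints over all pairs $i+j\equiv k\pmod m$, with the separation into $i+j=k$ and $i+j=k+m$ and the parity of $m$ entering through the scalar $r^m=\pm 1$, reproduces verbatim the two displayed inclusions for $\Delta(V_{+,i})$ and $\Delta(V_{-,i})$.

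The main obstacle I anticipate is bookkeeping rather than conceptual content: keeping the four $(\pm,\pm)$ sign combinations consistent across the pairing and matching the wrap-around case $a+b=i+m$ with the two parities of $m$ in parallel. A direct alternative, avoiding dualization (and hence any finite dimensionality assumption), would be to check $\Delta\circ\sigma=(\sigma\otimes\sigma)\circ\Delta$ directly on each $V_{\pm,i}$ by refining the decomposition of $\Delta(x)$ coming from Observation \ref{obse:Dcase}(2)(b) into its $V_{\epsilon,a}\otimes V_{\epsilon',b}$ pieces and reading off, via the required eigenvalue relation $\Delta(\sigma(x))=\pm r^i\Delta(x)$, which pieces must vanish.
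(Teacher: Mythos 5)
Your proposal is correct and follows essentially the same route as the paper: the authors' entire proof consists of the remark that one ``may proceed by duality'' from Theorem \ref{theo:algebraconditions}, and you carry out exactly that dualization (identifying $\sigma^\vee$ with the operator built from the dual decompositions $W_{\pm,i}$ and translating the unit and multiplicativity conditions back through the pairing, with the sign bookkeeping you describe). The details you supply, including the observation that the conditions on $E_{D,q^l}$ for $l\not\equiv i+j$ are automatic from Observation \ref{obse:Dcase}, are the ones the paper leaves implicit.
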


\begin{coro} \label{coro:Hopf-conditions}
Let $H$ be a Hopf algebra and let $D:H \rightarrow H$ be an automorphism of Hopf algebras of finite order $m$. 
Define as above, $q,r$, $\mathcal E$ and $E_{D,q^i}$ --for $i \in \mathcal E$--. 
For each $i \in \mathcal E$ we take an arbitrary decomposition of $E_{D,q^i}=V_{+,i}\oplus V_{-,i}$ and define a linear transformation $\sigma$ on $C$ as: $\sigma|_{V_{\pm,i}}=\pm r^i\operatorname{id}$. 
If the hypothesis of theorems \ref{theo:algebraconditions} and \ref{theo:coalgebraconditions} are simultaneously satisfied, then $\sigma$ is a Hopf algebra automorphism and $\sigma^2=D$. \qed
\end{coro}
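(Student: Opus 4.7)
The plan is essentially to glue together the two preceding theorems and invoke one standard fact about antipodes. The construction of $\sigma$ is identical in Theorems \ref{theo:algebraconditions} and \ref{theo:coalgebraconditions}: in both cases one chooses, for each $i\in\mathcal{E}$, a decomposition $E_{D,q^i}=V_{+,i}\oplus V_{-,i}$ and defines $\sigma|_{V_{\pm,i}}=\pm r^i \operatorname{id}$. In particular the definition of $\sigma$ (and the equality $\sigma^2=D$ on each $E_{D,q^i}$, hence on $H$) does not depend on which of the two theorems one is invoking. So there is no consistency issue to worry about in combining them.

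Given this, I would proceed in three quick steps. First, apply Theorem \ref{theo:algebraconditions} (whose hypotheses are assumed) to conclude that $\sigma:H\to H$ is an algebra automorphism. Second, apply Theorem \ref{theo:coalgebraconditions} (whose hypotheses are also assumed) to conclude that the same $\sigma$ is a coalgebra automorphism. Taken together, $\sigma$ is a bialgebra automorphism of $H$.

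The only remaining point is compatibility with the antipode, i.e.\ that $\sigma\compo \ant = \ant \compo \sigma$. This is the standard fact that any bialgebra morphism between Hopf algebras automatically commutes with the antipode: both $\sigma\compo\ant\compo\sigma^{-1}$ and $\ant$ are convolution inverses of $\operatorname{id}_H$ in $\operatorname{End}(H)$, and the convolution inverse (when it exists) is unique. Hence $\sigma$ is a Hopf algebra automorphism, and $\sigma^2=D$ holds by construction.

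The main obstacle is really nothing more than noticing this — the corollary is a pure packaging statement. The one place where a careful reader might want a line of justification is precisely the antipode compatibility, so I would include the one-sentence convolution argument above rather than leaving the proof entirely to \emph{qed}.
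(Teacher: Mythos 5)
Your proposal is correct and matches the paper's treatment: the corollary is stated there with an immediate \qed, precisely because it is the conjunction of Theorems \ref{theo:algebraconditions} and \ref{theo:coalgebraconditions} applied to the one and the same $\sigma$, plus the standard fact (already recorded earlier in the paper) that a bialgebra morphism between Hopf algebras commutes with the antipode. Your convolution-inverse argument for that last point is the standard one and is valid, so nothing is missing.
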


\subsection{A particular situation.}
We consider the following special cases of the above Corollary \ref{coro:Hopf-conditions}.
Assume that the splitting of the eigenspaces $E_{D,q^i}$ is trivial: \[V_{+,i}=E_{D,q^i}\,\,\,\,\text{and}\quad V_{-,i}=0.\] 

In this situation, and using the considerations of Observation \ref{obse:Dcase}, it is clear that some of the conditions of Corollary \ref{coro:Hopf-conditions} --i.e. 
of Theorems \ref{theo:algebraconditions} and \ref{theo:coalgebraconditions}-- are authomatically verified. In particular the case $m$ odd becomes conditionless.
Hence, we have the following particular result that in some cases provides an answer for the existence of a square root of a Hopf automorphism that is both multiplicative and comultiplicative.

\begin{coro}\label{coro:sqrpart} 
Assume that we are in the situation above. 
If $m$ is odd, then the square root of $D$ associated to the family of subspaces $V_{+,i}=E_{D,q^i}$ and $V_{-,i}=0$  is an automorphism of Hopf algebras. 
If $m$ is even, it is an automorphism of Hopf algebras if and only if $E_{D,q^i} E_{D,q^j}=0 $ for all $i,j \in \mathcal E$ such that $m \leq i+j \leq 2m-2$ and 
$\Delta(E_{D,q^i}) \subseteq \bigoplus_{\{a,b\in \mathcal E:a+b=i\}}(E_{D,q^a} \otimes E_{D,q^b})$ for all $i\in \mathcal E$. 
\qed
\end{coro}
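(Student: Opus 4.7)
The plan is to derive the corollary as a specialization of Corollary \ref{coro:Hopf-conditions} by inserting the trivial decomposition $V_{+,i}=E_{D,q^i}$, $V_{-,i}=0$ into the hypotheses of Theorems \ref{theo:algebraconditions} and \ref{theo:coalgebraconditions} and seeing which conditions become vacuous, which become automatic, and which survive as genuine constraints depending on the parity of $m$. Throughout, I will lean on Observation \ref{obse:Dcase}, which tells us precisely what it means for $D$ itself to be an algebra, coalgebra, and hence Hopf algebra automorphism in terms of its eigenspaces.

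First I will handle the algebra side. Condition (1) of Theorem \ref{theo:algebraconditions}, namely $1\in V_{+,0}$, is immediate from $D(1)=1$. In Condition 2(a), every clause involving $V_{-,i}$ or $V_{-,j}$ is vacuous, and the surviving inclusion $V_{+,i}V_{+,j}\subseteq V_{+,i+j}$ reads $E_{D,q^i}E_{D,q^j}\subseteq E_{D,q^{i+j}}$, which is part of Observation \ref{obse:Dcase}(1). Condition 2(b) is where the parity splits. For $m\leq i+j\leq 2m-2$ and $m$ odd, the required inclusion is $V_{+,i}V_{+,j}\subseteq V_{+,i+j-m}=E_{D,q^{i+j-m}}$, again automatic by Observation \ref{obse:Dcase}(1); the mixed clauses are vacuous. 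For $m$ even, however, the required inclusion becomes $V_{+,i}V_{+,j}\subseteq V_{-,i+j-m}=0$, which is the condition $E_{D,q^i}E_{D,q^j}=0$ listed in the corollary.

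Next I will do the coalgebra side via Theorem \ref{theo:coalgebraconditions}. Condition (1) is trivial since $V_{-,0}=0$. The inclusion on $\Delta(V_{-,i})$ is vacuous for the same reason. For $\Delta(V_{+,i})=\Delta(E_{D,q^i})$, the first summand collapses to $\bigoplus_{a+b=i}E_{D,q^a}\otimes E_{D,q^b}$. When $m$ is odd, the second summand collapses to $\bigoplus_{a+b=i+m}E_{D,q^a}\otimes E_{D,q^b}$, and the union of the two families is exactly the right-hand side in Observation \ref{obse:Dcase}(2), so the condition is automatic. When $m$ is even, the second summand is zero, and the condition becomes $\Delta(E_{D,q^i})\subseteq \bigoplus_{a+b=i}E_{D,q^a}\otimes E_{D,q^b}$, which is the coalgebra constraint stated in the corollary.

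Assembling the two sides and invoking Corollary \ref{coro:Hopf-conditions} yields the conclusion. The main piece of work is really the bookkeeping in the even case, where one must distinguish carefully between the two ranges $0\leq i+j\leq m-1$ and $m\leq i+j\leq 2m-2$ (respectively $a+b=i$ and $a+b=i+m$ on the coalgebra side) and recognize that setting $V_{-,\cdot}=0$ flips certain inclusions into vanishing requirements. No additional ideas beyond the machinery already established are needed.
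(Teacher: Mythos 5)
Your proposal is correct and follows exactly the route the paper intends: specialize Corollary \ref{coro:Hopf-conditions} to the trivial splitting $V_{+,i}=E_{D,q^i}$, $V_{-,i}=0$, and use Observation \ref{obse:Dcase} to identify which conditions of Theorems \ref{theo:algebraconditions} and \ref{theo:coalgebraconditions} become automatic and which survive (only the wrap-around cases $i+j\geq m$ and $a+b=i+m$ when $m$ is even). The paper leaves this bookkeeping implicit ("it is clear that some of the conditions \ldots are automatically verified"), and your write-up supplies those details correctly.
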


Observe that the case in which $m$ is odd has already been treated by an elementary reasonement in Observation \ref{obse:oddtrivial}.

\begin{example}[Taft algebra]\label{example:taft}
The Taft algebra $T_n$ is a generalization of the Sweedler's algebra $H_4$.
Let be $\omega\in\k$ a primitive root of order $n$ of 1 and 
\[
T_n=\left\langle g,x:\ g^n=1,\ x^n=0,\ gx-\omega xg=0\right\rangle.
\] 
$T_n$ is a Hopf algebra with coalgebra structure given by $g$ being a group-like element and $x$ a $(g,1)$-primitive element.

\noindent 
The set $\{g^rx^p:\ 0\leq r,p\leq n-1\}$ is a basis of $T_n$, so $\dim T_n=n^2$. 
We have $\ant^2(g)=g$ and $\ant^2(x)=\omega x$, therefore the order of $\ant^2$ is $n$.
\noindent
The eigenvalues of $\ant^2$ are $\{1,\omega,\cdots,\omega^{n-1}\}$, and the corresponding eigenspaces are $E_{\ant^2,\omega^i}=\{x^i,gx^i,\cdots,g^{n-1}x^i\}_{\mathbb K}$. 

\noindent
With regard of the conditions of Corollary \ref{coro:sqrpart}, we have that $E_{\ant^2,\omega^i}E_{\ant^2,\omega^j}=\{g^k \omega^{i+j}:k=0,\cdots,n-1\}$.  

\noindent
Hence $E_{\ant^2,\omega^i}E_{\ant^2,\omega^j}=0$ for $i+j \geq n$ as required. 

\noindent
Morever, as $\Delta(g^kx^i)=(g^k \otimes g^k)(x \otimes g + 1 \otimes x)^i$, it is clear that $\Delta(E_{\ant^2,\omega^i}) \subseteq \sum_{a+b=i} E_{\ant^2,\omega^a} \otimes E_{\ant^2,\omega^b}$.

\noindent
Hence, in this manner we prove that $T_n$ is almost involutive. 

\noindent
A direct verification shows that if we take $\nu\in\k$, $\nu^2=\omega$. The maps $\sigma_\pm$ defined as $\sigma_\pm(g)=g$ and $\sigma_\pm(x)=\pm \nu x$ are companion automorphisms.
\end{example}


\begin{example}\label{example:general}

\noindent
Assume that we have a trivial extension of a finite dimensional Hopf algebra, with the additional property that $K=E_1$. The spectral decomposition of $H$ with respect to $\ant^2$ becomes: 
\[
H= E_{\ant^2,1} \oplus \bigoplus_{i \in \mathcal E_M}E_{\ant^{2},q^i} \quad \text{with}\quad  M=\bigoplus_{i \in \mathcal E_M}E_{\ant^{2},q^i}.
\]
\noindent
In this case it is clear that the conditions of Corollary \ref{coro:sqrpart} are satisfied.

\noindent
Indeed, if we look at the condition regarding the product, the only cases in which the sum of exponents of the corresponding eigenvalues 
of two eigenvectors may be larger than $m$, is for the case that the exponents of the eigenvectors are in $\mathcal E_M$.  In this case, the condition $M^2=0$, guarantees that the product of the corresponding eigenspaces is trivial. 

\noindent
An argument along the same lines and using the condition that $M$ is a $K$--bicomodule, shows that the condition regarding the coproduct in Corollary \ref{coro:sqrpart} is satisfied.

\end{example}

\end{document}